\newcommand{\PP}{\mathbb{P}}
\newcommand{\N}{\mathbf N}
\newcommand{\E}{\mathbb{E}}
\newcommand{\B}{\mathbf B}
\newcommand{\T}{\mathbf T}
\theoremstyle{plain}
\newtheorem{theorem}{Theorem}[section]
\newtheorem{lemma}[theorem]{Lemma}
\theoremstyle{definition}
\newtheorem{definition}[theorem]{Definition}
\theoremstyle{remark}
\newtheorem{remark}[theorem]{Remark}
\numberwithin{equation}{section}
\author{
Charles Burnette \\
Department of Mathematics \\
Drexel University \\
Philadelphia, PA 19104-2875 \\
cdb72@drexel.edu
\and
Eric Schmutz \\
Department of Mathematics \\
Drexel University \\
Philadelphia, PA 19104-2875 \\
Eric.Jonathan.Schmutz@drexel.edu
}
\title{Representing Random Permutations as the Product of Two Involutions}
\begin{document}

\thispagestyle{empty}

\maketitle

\begin{abstract}
An involution is a permutation that is its own inverse.  
Given a permutation $\sigma$ of $[n],$ let $\N_{n}(\sigma)$ denote the number 
of ways to write $\sigma$   as a product of two involutions of $[n].$  If we endow  
the symmetric groups $S_{n}$ with uniform probability measures, then
the random variables  ${\mathbf N}_{n}$ are   asymptotically lognormal.  

 The proof is based upon the observation  that, for most permutations $\sigma$, 
$\N_{n}(\sigma)$ can be well approximated  by $\B_{n}(\sigma),$  
the product of the cycle lengths of $\sigma$.   Asymptotic lognormality of 
$\N_{n}$ can therefore be deduced from Erd\H{o}s and Tur\'{a}n's theorem 
that  $\B_{n}$ is itself asymptotically lognormal.
\end{abstract}

\newpage

\section{Introduction}
An involution is a permutation that is its own inverse, i.e. a permutation whose cycle 
lengths are all less than or equal to two.  If $\sigma$ is a permutation of $[n],$
let $\N_{n}(\sigma)$ be the number of ordered pairs of involutions $\tau_{1},\tau_{2}$ of $[n]$
such that $\sigma=\tau_{2}\circ \tau_{1}$. The goal of this paper is to determine the asymptotic 
distribution of the random variable $\N_{n}$ for uniform random permutations $\sigma$.

 Let ${\cal T}_{n}$ be the set of all involutions of $[n]$.  The cardinalities  
$|{\cal T}_{n}|, n=1,2,3,\dots $ have been extensively investigated and form 
OEIS Sequence A000085 \cite{sequence}.  See also Amdeberhan and 
Moll \cite{AM} for more recent work.
 Of  particular importance for this paper is an asymptotic formula that was derived by 
Chowla, Herstein, and Moore \cite{CHM}:
\begin{equation}
\label{asym-tn}
|{\cal T}_{n}| \sim \frac{1}{\sqrt{2}}\!\left(\frac{n}{e}\right)^{n/2}e^{n^{1/2}-1/4}.
\end{equation}
Related approximations appear in Moser and Wyman \cite{MoserWyman},  \cite{MoserWyman-ae}.

Vivaldi and Roberts\cite{RV} studied the
 random permutations that
are obtained by multiplying random involutions with various restrictions on their
fixed points. 
 However the product of two uniformly
random involutions is not a uniformly random permutation. 
For example the identity permutation  is generated with probability $\frac{1}{|{\cal T}_{n}|},$
which is much larger than $\frac{1}{n!}.$ 
   Thus 
$\N_{n}$ is clearly not constant. 

Let
 $I_{\tau_{2},\tau_{1}}(\sigma)=1$ if
 $\tau_{2}\circ \tau_{1}=\sigma$ (and $I_{\tau_{2},\tau_{1}}(\sigma)=0$ otherwise),   so that 
\begin{equation}
\label{sumindicators}
\N_{n}=\sum\limits_{\tau_{1},\tau_{2}}I_{\tau_{2},\tau_{1}}.
\end{equation}
Using this representation and Stirling's formula, it is straightforward to estimate
 the average  number of factorizations \cite{LugoT}:
\begin{equation}
\label{average}
 \E_{n}(\N_{n})=  \frac{1}{n!}\sum\limits_{\tau_{1},\tau_{2}}
\sum\limits_{\sigma}I_{\tau_{2},\tau_{1}}(\sigma)=
\frac{|{\cal T}_{n}|^{2} }{n!}\sim \frac{e^{2\sqrt{n}}} {\sqrt{8\pi e n}}.
\end{equation}
Our results show that the average in (\ref{average}) is misleadingly large; if $n$
 is large, then for most 
permutations  $\sigma\in S_{n},$ one has 
\[ e^{(\frac{1}{2}-\epsilon)\log^{2}n}<\N_{n}(\sigma)< 
 e^{(\frac{1}{2}+\epsilon)\log^{2}n}. \] 

Another  consequence of the sum of indicators representation  (\ref{sumindicators})
is that $\max_{\sigma} \N_{n}(\sigma)=|{\cal T}_{n}|$. 
The  unique permutation  that attains the maximum is the
identity permutation that fixes all $n$ points.  At the other extreme, for $n\geq 2$,
$\min_{\sigma} \N_{n}(\sigma)=n-1$. The minimum  is attained only by the 
 $\frac{n!}{n-1}$ permutations that have
 a cycle of length $n-1$. These two extremal results are stated on page
161 of Lugo's thesis\cite{LugoT} and  are also  proved later in  \cite{this}. 
Lugo also conjectured, but did not prove, that $\N_{n}$ is asymptotically lognormal.

There is an extensive literature on formulas for the
 number of  ways to write a permutation as the product of two or more 
permutations with various restrictions on the conjugacy classes
of the factors of the product.  Without trying to review that literature,
we refer readers to \cite{GS}, \cite{Irving} as possible starting points.
For asymptotic problems, even an explicit formula can be quite useless
if it is too complicated.  However, as the authors in  \cite{GS} and  \cite{Irving}
point out,  formulas with non-negative terms tend to be more tractable. 
In this paper,  we make use of one such formula:

\begin{equation}
\label{Nproduct}
\N_{n}(\sigma)=\prod\limits_{k=1}^{n}\sum\limits_{j=0}^{\lfloor c_{k}/2\rfloor}
\frac{k^{c_{k}-j}c_{k}!}{2^{j}j! (c_{k}-2j)! },
\end{equation}
 where $c_{k}=c_{k}(\sigma)$ denotes the number of cycles of length $k$ 
that $\sigma$ has. As far as we know, the first 
complete proofs of (\ref{Nproduct})  are in  Petersen and Tenner \cite{PT} and
Lugo\cite{LugoT}. 

We use the formula (\ref{Nproduct}) to prove that, for most permutations $\sigma$, 
$\N_{n}(\sigma)$ can be well approximated  by $\B_{n}(\sigma)= \prod_{k}k^{c_{k}},$  
the product of the cycle lengths of $\sigma$.
The random variable   $\B_{n}$ has been studied by many authors,
 beginning with the work of Erd\"os and Tur\'an \cite{ET1}, \cite{ET4}. 
Asymptotic lognormality of 
$\N_{n}$ will be deduced from the known fact that $\B_{n}$
is asymptotically lognormal.

\section{Factorizations}
This section is more or less expository: we discuss the known factorization
 (\ref{Nproduct}).   
For each integer $x,$ let $\overline{x}=x-n\lfloor \frac{x}{n}\rfloor$
denote the integer remainder when $x$ is divided by $n$.
(The positive integer $n$ will be clear from context.)
Yang, Ellis, Mamakani,  and Ruskey \cite{YEMR} 
 proved the following lemma.  
\begin{lemma}
\label{onecycle} 
There are exactly $n$ ways to factor the  $n$-cycle $\sigma=(0,1,\dots ,n-1)$ as the 
product  of two involutions of $\lbrace 0,1,2,\dots ,n-1\rbrace$.  The $n$ factorizations 
are $\sigma = I_k \circ I_{k - 1}$,
  $1\leq k\leq n$, where    $I_{k}(x)= \overline{k-x}$   is the 
integer remainder when $k-x$ is divided by $n$.
\end{lemma}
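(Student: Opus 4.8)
The plan is to verify the $n$ stated pairs directly, and then to prove there are no others by showing that the left factor in any factorization is forced to be one of the $I_j$.

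First I would record two routine computations. Each $I_k$ is an involution, because $I_k(I_k(x))=\overline{k-\overline{k-x}}=\overline{k-(k-x)}=\overline{x}=x$ for $x\in\{0,\dots,n-1\}$; and for $1\le k\le n$ one has $I_k\circ I_{k-1}(x)=\overline{k-\overline{(k-1)-x}}=\overline{x+1}=\sigma(x)$, so $I_k\circ I_{k-1}=\sigma$. These $n$ factorizations are pairwise distinct, since the left factors $I_0,I_1,\dots,I_{n-1}$ satisfy $I_j(0)=j$ and are therefore distinct.

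Next comes the uniqueness argument, which is the crux. Suppose $\sigma=\tau_2\circ\tau_1$ with $\tau_1,\tau_2$ involutions of $\{0,\dots,n-1\}$. Composing on the right with $\tau_1$ gives $\tau_2=\sigma\circ\tau_1$, so $\tau_2$ is determined by $\tau_1$; moreover $\tau_1\circ\sigma\circ\tau_1=\tau_1\circ\tau_2=(\tau_2\circ\tau_1)^{-1}=\sigma^{-1}$. Rewriting this as $\tau_1\circ\sigma=\sigma^{-1}\circ\tau_1$ and using $\sigma(x)=\overline{x+1}$, $\sigma^{-1}(x)=\overline{x-1}$, we obtain the recursion $\tau_1(\overline{x+1})=\overline{\tau_1(x)-1}$ valid for all $x$. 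Setting $j:=\tau_1(0)\in\{0,1,\dots,n-1\}$ and iterating, $\tau_1(x)=\overline{j-x}=I_j(x)$; hence $\tau_1\in\{I_0,\dots,I_{n-1}\}$, and the corresponding $\tau_2=\sigma\circ I_j$ equals $I_{j+1}$. So every factorization is one of those exhibited in the first step, and there are exactly $n$ of them.

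I expect the only real subtlety to be bookkeeping: getting the composition order right, so that it is the first factor $\tau_1$ (not $\tau_2$) that satisfies $\tau_1\circ\sigma\circ\tau_1=\sigma^{-1}$, and checking that the recursion for $\tau_1$ closes up consistently around the cycle. Structurally, the content of the uniqueness step is that $\{\tau\in S_n:\tau\sigma\tau^{-1}=\sigma^{-1}\}$ is a coset of the centralizer $C_{S_n}(\sigma)=\langle\sigma\rangle$ and so has exactly $n$ elements; the recursion merely identifies this coset as $\{I_0,\dots,I_{n-1}\}$ and shows, as a byproduct, that each of its elements happens to be an involution, which is what makes it count toward $\N_n(\sigma)$.
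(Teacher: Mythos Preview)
Your argument is correct. The paper does not actually prove this lemma; it cites \cite{YEMR} and then, in Lemma~\ref{twocycles}, adapts that argument to the two-cycle situation. Your proof follows exactly the same template as the paper's proof of Lemma~\ref{twocycles}: verify the $n$ displayed factorizations directly, then fix the value of the first factor at $0$ and propagate by a one-step recursion to show it is forced to be one of the $I_j$. The only difference is presentational: you first isolate the conjugation identity $\tau_1\sigma\tau_1=\sigma^{-1}$ and read off the recursion $\tau_1(\overline{x+1})=\overline{\tau_1(x)-1}$ from it, whereas the paper's inductive step in Lemma~\ref{twocycles} does the equivalent computation by repeatedly inserting $S^2=\mathrm{id}$ and $\sigma=S\circ T$. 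One cosmetic slip: for $1\le k\le n$ the \emph{left} factors are $I_1,\dots,I_n$, not $I_0,\dots,I_{n-1}$; since $I_n=I_0$ these are the same set, so your distinctness argument is unaffected, but it would read more cleanly to argue via the right factors $I_0,\dots,I_{n-1}$.
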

Our notational preference for modular arithmetic 
is influenced by page 158 of \cite{GHR},
where the setting is different but the   factorization is similar.
In\cite{YEMR}, the proof  of lemma \ref{onecycle} is quite short, 
elementary, and easy to read.  
As we show in proposition  \ref{twocycles}  below, the proof of lemma \ref{onecycle} can
 be adapted to the product of two  $m$ cycles, and 
therefore can be used as the basis for an alternative  proof of  (\ref{Nproduct}).
Corresponding  lemmas appear in  \cite{LugoT} and \cite{PT}, 
 but the derivations there  are based on
 a graph theoretical insight and appear to be different from the
 proof that is presented here. 
\par
For any permutation 
$\sigma$, we can  apply  lemma \ref{onecycle} separately to each 
of the cycles of $\sigma$.
Therefore a consequence of lemma \ref{onecycle} is that   the 
product of the cycle lengths is
 a lower bound:  
\begin{equation}
\label{lowerbound}
\N_{n}(\sigma)\geq \B_{n}(\sigma).
\end{equation}
This inequality is  not sharp because, in the factorization 
$\sigma= \tau_{2}\circ \tau_{1}$,
 there is no requirement that the cycles of $\sigma $ are invariant under the 
 involutions $\tau_{1}$ and $\tau_{2}$.
For example, we can write $\sigma=(1,2,3)(4,5,6)$   as
 $\tau_{2}\circ \tau_{1}$,
where $\tau_{2}=(1,4)(2,6)(3,5)$ and $\tau_{1}=(1,6)(2,5)(3,4)$.
Both involutions  ``exchange'' the elements of $\lbrace 1,2,3\rbrace $
with those of $\lbrace 4,5,6\rbrace. $
 The next lemma asserts  that there are no other possibilities.

\begin{lemma}  
\label{swaplemma}  Suppose   $\mathcal{O}$ is  the set of points on a cycle of  $\sigma$, 
and  that $\sigma=\tau_{2}\circ\tau_{1}$ is a factorization of $\sigma$ into two involutions.
Then  $\tau_1(\mathcal{O}) = \tau_2(\mathcal{O})$, and
$\tau_{1}(\mathcal{O})$ is the set of points  on  a cycle of $\sigma$
  of length $|{\cal O}|$. 
\end{lemma}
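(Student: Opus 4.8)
The plan is to use the fact that an involution factorization of $\sigma$ forces each factor to \emph{conjugate $\sigma$ to $\sigma^{-1}$}, together with the elementary observation that $\sigma$ and $\sigma^{-1}$ have exactly the same cycles once a cycle is regarded as a set of points (passing to the inverse only reverses each cyclic word). First I would record the conjugation identities. Since $\tau_1$ and $\tau_2$ are involutions, $\tau_i^{-1}=\tau_i$, and from $\sigma=\tau_2\circ\tau_1$ we get
\[
\sigma^{-1}=(\tau_2\circ\tau_1)^{-1}=\tau_1^{-1}\circ\tau_2^{-1}=\tau_1\circ\tau_2 .
\]
Hence $\tau_1\circ\sigma\circ\tau_1^{-1}=\tau_1\circ\tau_2\circ\tau_1\circ\tau_1=\tau_1\circ\tau_2=\sigma^{-1}$, and likewise $\tau_2\circ\sigma\circ\tau_2^{-1}=\tau_2\circ\tau_2\circ\tau_1\circ\tau_2=\tau_1\circ\tau_2=\sigma^{-1}$. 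So it suffices to prove the following sub-claim: if $\tau$ is \emph{any} permutation with $\tau\circ\sigma\circ\tau^{-1}=\sigma^{-1}$, then $\tau(\mathcal{O})$ is the point set of a cycle of $\sigma$ of length $|\mathcal{O}|$.

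Second, I would prove the sub-claim directly. Write $\mathcal{O}=\{x,\sigma x,\sigma^2 x,\dots\}$, the (finite) orbit of $x$ under $\sigma$. From $\tau\circ\sigma=\sigma^{-1}\circ\tau$ one gets $\tau\circ\sigma^{i}=\sigma^{-i}\circ\tau$ for every integer $i$, so $\tau(\sigma^{i}x)=\sigma^{-i}(\tau x)$ and therefore
\[
\tau(\mathcal{O})=\{\tau x,\ \sigma^{-1}\tau x,\ \sigma^{-2}\tau x,\dots\},
\]
which is precisely the orbit of $\tau x$ under $\sigma^{-1}$. That orbit coincides, as a set, with the orbit of $\tau x$ under $\sigma$, i.e. with the point set of a cycle of $\sigma$; and $|\tau(\mathcal{O})|=|\mathcal{O}|$ because $\tau$ is injective. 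This establishes the sub-claim, and hence that both $\tau_1(\mathcal{O})$ and $\tau_2(\mathcal{O})$ are point sets of cycles of $\sigma$ of size $|\mathcal{O}|$.

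Finally, to obtain $\tau_1(\mathcal{O})=\tau_2(\mathcal{O})$, I would invoke the factorization once more: from $\sigma=\tau_2\circ\tau_1$ and $\tau_1^{-1}=\tau_1$ we have $\tau_2=\sigma\circ\tau_1$, so $\tau_2(\mathcal{O})=\sigma\bigl(\tau_1(\mathcal{O})\bigr)$. Since $\tau_1(\mathcal{O})$ was just shown to be the point set of a cycle of $\sigma$, it is $\sigma$-invariant, whence $\sigma\bigl(\tau_1(\mathcal{O})\bigr)=\tau_1(\mathcal{O})$ and the two images coincide. I do not expect a real obstacle here; the only point that needs care is the bookkeeping distinction between a cycle as a cyclic word and a cycle as a set of points — in particular that reversing a cyclic word (replacing $\sigma$ by $\sigma^{-1}$) leaves the underlying set unchanged — and noting that the length-one cycles (fixed points of $\sigma$) are covered by exactly the same argument with no special case.
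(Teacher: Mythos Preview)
Your proof is correct and follows essentially the same route as the paper's. The paper computes directly that $\tau_1(\sigma^\ell x)=(\tau_1\circ\tau_2)^\ell\circ\tau_1(x)=\sigma^{-\ell}(\tau_1 x)$ to show any two points of $\tau_1(\mathcal{O})$ lie on a common $\sigma$-cycle, and then uses $\tau_2=\sigma\circ\tau_1$ to obtain $\tau_1(\mathcal{O})=\tau_2(\mathcal{O})$; you do the same thing, just packaging the first step more cleanly as the conjugation identity $\tau_i\,\sigma\,\tau_i^{-1}=\sigma^{-1}$ before specializing.
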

\begin{proof}
Because each $\tau_{i}$
is a bijection, it is clear that  $|\tau_1(\mathcal{O})|=|\tau_2(\mathcal{O})|=
|\mathcal{O}|$. 

Suppose $y_{1},y _{2}$ are points in $\tau_{1}({\cal O}).$ We need to verify that 
$y_{1}$ and $y _{2}$ are on the same cycle of $\sigma$. 
Let $x_{1},x_{2}$ be the
their preimages on ${\cal O}: \tau_{1}(x_{i})=y_{i}$, $i=1,2$.
 Because $x_{1}$ and $x_{2}$ are on the same cycle ${\cal O}$, we 
have $x_{2}= \sigma^{\ell}     (x_{1})$ for some $\ell$.
But then $y_{2}=\tau_{1}( \sigma^{\ell}     (x_{1}))=
     \tau_{1} \circ (\tau_{2}\circ \tau_{1})^{\ell}(x_{1})=
(\tau_{1}\circ \tau_{2})^{\ell}\circ \tau_{1}(x_{1})
=\sigma^{-\ell}(y_{1}).$  Thus  $y_{1}$ and $y _{2}$ are on the same cycle,
and  $\tau_{1}({\cal O})$ is a single cycle
of length $|{\cal O}|. $

Finally, note that $\tau_{2}= \sigma \circ \tau_{1}. $
If $x\in \mathcal{O}$, then the set of points on the cycle of $\sigma$ that contains  $\tau_{2}(x)$ is  
$ \lbrace v: v=\sigma^{t}\circ \tau_{2}(x) \text{ for some } t\in {\mathbb Z}\rbrace =  
\lbrace v: v=\sigma^{t+1}\circ \tau_{1}(x) \text{ for some } t\in {\mathbb Z}\rbrace,$
and the latter set is the set of points on the cycle of $\sigma$ that contains $\tau_{1}(x).$
This proves that $\tau_1(\mathcal{O}) = \tau_2(\mathcal{O})$; the two involutions
both map ${\cal O}$ to the same cycle.
\end{proof}

\begin{definition}
Let $\mathcal{O}_1$ and $\mathcal{O}_2$ be two distinct sets of points on cycles of $\sigma.$ Two involutions $\tau_1, \tau_2$ \textit{exchange} $\mathcal{O}_1$ and $\mathcal{O}_2$ provided that $\sigma = \tau_2 \circ \tau_1$ and $\tau_1(\mathcal{O}_1) = \tau_2(\mathcal{O}_1) = \mathcal{O}_2.$
\end{definition}

\begin{lemma}
\label{twocycles} If $\sigma=(0,1,2,\dots ,n-1)(n,n+1,n+2,\dots ,2n-1)$, then there
are precisely $n$ ways to write $\sigma$ as a product of  two involutions of
$\lbrace 0,1,\dots ,2n-1\rbrace$ that exchange the two cycles of $\sigma$.
\end{lemma}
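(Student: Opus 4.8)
The plan is to use Lemma~\ref{swaplemma} to reduce the statement to a short counting problem among permutations of $\{0,1,\dots,n-1\}$. Write $\mathcal{O}_{1}=\{0,1,\dots,n-1\}$ and $\mathcal{O}_{2}=\{n,n+1,\dots,2n-1\}$ for the supports of the two cycles. If $\tau_{1},\tau_{2}$ exchange the two cycles, then $\tau_{i}(\mathcal{O}_{1})=\mathcal{O}_{2}$, and since each $\tau_{i}$ is an involution of $\mathcal{O}_{1}\cup\mathcal{O}_{2}$ it also satisfies $\tau_{i}(\mathcal{O}_{2})=\mathcal{O}_{1}$; in particular $\tau_i$ has no fixed points. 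Hence $\tau_{1}$ is completely determined by a permutation $f$ of $\{0,1,\dots,n-1\}$ via $\tau_{1}(x)=n+f(x)$ for $x\in\mathcal{O}_{1}$: the involution property forces $\tau_{1}(n+z)=f^{-1}(z)$, and conversely every permutation $f$ yields such an involution. Writing $\tau_{2}(x)=n+g(x)$ likewise, I would record that the factorizations exchanging the two cycles are in bijection with the admissible pairs $(f,g)$ of permutations of $\{0,1,\dots,n-1\}$.

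Next I would translate the equation $\sigma=\tau_{2}\circ\tau_{1}$ into conditions on $f$ and $g$. Let $s$ be the $n$-cycle $s(x)=\overline{x+1}$, so that $\sigma$ acts as $s$ on $\mathcal{O}_{1}$ and as $s$ on $\mathcal{O}_{2}$ under the identification $n+z\leftrightarrow z$. Evaluating $\tau_{2}\circ\tau_{1}$ on $\mathcal{O}_{1}$ gives $g^{-1}\circ f=s$, and on $\mathcal{O}_{2}$ gives $g\circ f^{-1}=s$. The first equation says $f=g\circ s$; substituting this into the second and simplifying shows the two equations together are equivalent to
\[
 f=g\circ s \qquad\text{and}\qquad g^{-1}\circ s\circ g=s^{-1}.
\]
Thus every admissible pair is uniquely determined by a permutation $g$ of $\{0,1,\dots,n-1\}$ that conjugates $s$ to $s^{-1}$, and the number of factorizations exchanging the two cycles equals the number of such $g$.

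Finally I would count those $g$. The permutation $g(x)=\overline{-x}$ satisfies $g^{-1}\circ s\circ g=s^{-1}$, so the solution set is nonempty, and it is a coset of the centralizer of $s$. Since the centralizer of an $n$-cycle in the symmetric group on $n$ letters is the cyclic group $\langle s\rangle$ of order $n$, there are exactly $n$ admissible $g$, hence exactly $n$ factorizations of $\sigma$ that exchange its two cycles. Tracing the bijection, these factorizations are $\sigma=\tau_{2}\circ\tau_{1}$ with $\tau_{1}$ the involution exchanging $j$ with $n+\overline{k-1-j}$ for each $j\in\{0,1,\dots,n-1\}$ and $\tau_{2}$ the involution exchanging $j$ with $n+\overline{k-j}$, for $1\le k\le n$ --- a direct analogue of the factorizations in Lemma~\ref{onecycle}.

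The routine but delicate part is the second step: checking that the two ``cross'' equations coming from the two cycles collapse to the single conjugacy relation $g^{-1}sg=s^{-1}$ without losing or gaining solutions, and confirming that the involution constraints on $\tau_{1}$ and $\tau_{2}$ are automatically encoded in the parametrization by $f$ and $g$. The count in the last step is standard. One could avoid group theory entirely by imitating the explicit argument of Yang, Ellis, Mamakani, and Ruskey for Lemma~\ref{onecycle} and writing the $n$ factorizations down directly; the coset argument above is simply a more compact packaging of that computation, and it is this adaptable pattern that underlies the derivation of (\ref{Nproduct}).
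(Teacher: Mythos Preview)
Your argument is correct, and the verification that the two equations $g^{-1}f=s$ and $gf^{-1}=s$ collapse to $f=gs$ together with $g^{-1}sg=s^{-1}$ is clean; the coset count then finishes the job.

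The paper takes a different, more hands-on route. It first exhibits the $n$ factorizations explicitly by defining involutions $J_k$ via the transpositions $(x,\,n+\overline{k-x})$ and checking $\sigma=J_k\circ J_{k-1}$, and then proves there are no others by an induction: given any exchanging factorization $\sigma=S\circ T$, one fixes $k$ by $S(0)=n+\overline{k}$ and shows step by step that $S(i)=n+\overline{k-i}$ and $T(i)=n+\overline{k-1-i}$ for all $i$. Your approach replaces that induction with a single structural observation (the solution set is a coset of the centralizer of the $n$-cycle $s$), which is shorter and makes the appearance of the number $n$ transparent; the price is invoking the standard fact that the centralizer of an $n$-cycle in $S_n$ has order $n$. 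The paper's argument, by contrast, stays entirely at the level of elementary bookkeeping and mirrors the Yang--Ellis--Mamakani--Ruskey proof of Lemma~\ref{onecycle} almost verbatim, which is the point the paper is making about adaptability. Either route yields the same explicit list of factorizations, and your final formulas agree with the paper's $J_k$.
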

\vskip.2cm
{\sc Example:} If $n=5$, then one of the five factorizations is 
$(0,1,2,3,4)(5,6,7,8,9)=J_{3}\circ J_{2}$,\vskip0cm where
  $J_{3}=(0,8)(1,7)(2,6)(3,5)(4,9)$ and $J_{2}=(0,7)(1,6)(2,5)(3,9)(4,8)$.
\vskip.2cm
\begin{proof} Let $X=\lbrace 0,1,\dots ,2n-1\rbrace.$ For integral $k$, define  $J_{k}$
to be the involution whose $n$ transpositions  are $(x, n+\overline{k-x})$,   $x=0,1,2,\dots ,n-1$.
Note that  $J_{k}(x)=J_{k\pm n}(x),$ so  we are free to 
calculate the index $k$ modulo  $n$. 
Also note that
if $y=n+\overline{k-x}$, then $J_{k}(y)=\overline{x}.$
  Hence it  is straightforward to 
 verify that, for any integer $k$, 
 $\sigma=J_{k}\circ J_{k-1}$. 
Since there are $n$ choices for $\overline{k}$, this proves that
there {\sl at least} $n$ of the factorizations.

Now suppose $\sigma=S\circ T$ for some involutions $S$ and $T$ on $X$, and suppose $S$ and $T$ 
exchange the two cycles of $\sigma$.  Because $S$ exchanges the cycles of $\sigma$, there must 
be some $k$ for which $S(0)=n+\overline{k}$.   
To prove the lemma, it suffices to prove that $S=J_{k}$ and $T=J_{k-1}$.
We  use induction to show that, for $0\leq i< n$, 
$S(i)=n+\overline{k-i}$ and $T(i)=n+\overline{k-1-i}.$

For the base case $i=0$, we  already have $S(0)=n+\overline{k}$.
Note that
 $T(n+\overline{k-1})=S^{2}\circ T(n+\overline{k-1}) =
S\circ \sigma(n+\overline{k-1})=S(n+\overline{k})=0$. Therefore  $T(0)=n+\overline{k-1}$. 
This completes  the base case $i=0$.

Now let $0<i<n-1$, and assume the inductive hypothesis.
Since $i=\sigma(i-1)=ST(i-1)$, we have  
\[ S(i)=S^{2}\circ T(i-1) =T(i-1)\underbrace{=}_{\text{ind.hypoth.}}n+
\overline{k-1-(i-1)}=n+\overline{k-i}.\]
Similarly
\[  T(n+\overline{k-i-1})=S^{2}\circ T(n+\overline{k-i-1}       )=
 S\circ \sigma  (n+\overline{k-i-1})   =     S(n+\overline{k-i})=i.\]
 Therefore
\[ T(i)=n+\overline{k-1-i}.\]
\end{proof}

\vskip.2cm

For non-negative integers $m$ and $k$ define
\begin{equation}
\label{defVm}
 V_{m}(k)=\sum_{j = 0}^{\lfloor m/2 \rfloor} 
\frac{k^{- j}m!}{2^jj!(m - 2j)!} = \frac{He_{m}\!\left(i\sqrt{k}\right)}{\left(i\sqrt{k}\right)^{m}},
\end{equation}
where $He_m$ is the 
\lq\lq probabilists' Hermite polynomial\rq\rq
$He_{m}(x) = m!\sum_{r = 0}^{\lfloor m/2 \rfloor} 
\frac{(\textendash1)^r}{r!(m - 2r)!}\frac{x^{m - 2r}}{2^r}.$
We thank Victor Moll for pointing out this connection with the
 Hermite polynomials.  A less general verion appears as 
equation 2 of Moser and Wyman\cite{MoserWyman}. 

\begin{theorem} (Lugo, Petersen,Tenner)
\label{LPT}
If $c_k(\sigma)$ denotes the number of $k$-cycles that $\sigma \in S_n$ has, 
then  \[  \N_{n}(\sigma)=\B_{n}(\sigma) \prod\limits_{k=1}^{n} V_{c_{k}}(k)\]
\end{theorem}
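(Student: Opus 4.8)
The plan is to use Lemma~\ref{swaplemma} to break any factorization of $\sigma$ along the cycles of $\sigma$, reducing the count to a product over cycle lengths, and then to evaluate each factor by combining Lemma~\ref{onecycle}, Lemma~\ref{twocycles}, and an elementary count of pairings.

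First I would extract the structural consequence of Lemma~\ref{swaplemma}. In any factorization $\sigma=\tau_2\circ\tau_1$ into involutions, $\tau_1$ sends the support of a cycle of $\sigma$ to the support of a cycle of $\sigma$ of the same length, and $\tau_2$ induces the very same permutation of supports. Since $\tau_1$ is an involution, the orbits of this action on the set of cycle supports are singletons $\{\mathcal{O}\}$ with $\tau_1(\mathcal{O})=\mathcal{O}$, or transposed pairs $\{\mathcal{O}_1,\mathcal{O}_2\}$ of equal-length supports with $\tau_1(\mathcal{O}_1)=\mathcal{O}_2$. Grouping supports accordingly partitions $[n]$ into $\tau_1$-invariant (hence $\sigma$-invariant and $\tau_2$-invariant) blocks; restricting $\tau_1$ and $\tau_2$ to a block yields two involutions of that block whose product is $\sigma$ restricted to the block, and conversely a choice of one such ``local'' factorization for each block glues back to a global factorization. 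One checks that this restriction/gluing correspondence is a bijection; I expect this bookkeeping to be the only real obstacle, since one must verify that the block structure is recoverable from $\tau_1$ alone, that a transposed pair genuinely forces the exchanging hypothesis of Lemma~\ref{twocycles} (rather than the fixed case), and that distinct lengths never interact.

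Granting the correspondence, I would count. By Lemma~\ref{onecycle}, a fixed $k$-cycle contributes exactly $k$ local factorizations. By Lemma~\ref{twocycles}, applied after relabeling the ground set so the pair is in the canonical form of that lemma, an exchanged pair of $k$-cycles contributes exactly $k$ local factorizations. For a fixed length $k$, the number of ways to single out $j$ exchanged pairs and $c_k-2j$ fixed cycles among the $c_k$ cycles of length $k$ is the standard pairing count $\frac{c_k!}{2^jj!(c_k-2j)!}$. Because an exchanged pair must join two cycles of equal length, the choices made for different lengths are independent, so the global count factors over $k$.

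Assembling these pieces, the number of factorizations of $\sigma$ whose local behavior on the $k$-cycles uses $j$ exchanged pairs is $\frac{c_k!}{2^jj!(c_k-2j)!}\,k^{\,c_k-2j}\,k^{\,j}$; summing over $j$ and multiplying over $k$ gives
\[
\N_n(\sigma)=\prod_{k=1}^{n}\sum_{j=0}^{\lfloor c_k/2\rfloor}\frac{c_k!\,k^{\,c_k-j}}{2^jj!(c_k-2j)!}
=\prod_{k=1}^{n}k^{c_k}V_{c_k}(k)=\B_n(\sigma)\prod_{k=1}^{n}V_{c_k}(k),
\]
which is the claimed identity.
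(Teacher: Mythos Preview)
Your argument is correct and follows essentially the same route as the paper: use Lemma~\ref{swaplemma} to see that the involutions either fix each cycle or exchange pairs of equal-length cycles, count the $\frac{c_k!}{2^jj!(c_k-2j)!}$ pairings for each $j$, and multiply by the $k^{c_k-2j}\cdot k^{j}$ local factorizations supplied by Lemmas~\ref{onecycle} and~\ref{twocycles}. The paper's proof is terser about the restriction/gluing bijection you spell out, but the logic is identical.
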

\begin{proof}
By   lemma
\ref{swaplemma},  any involution factorization of $\sigma$ exchanges some 
number of pairs of cycles of the same size, and leaves the rest fixed. 
 For each $j \leq \lfloor c_{k}/2 \rfloor,$ there are precisely
$\frac{c_{k}!}{2^jj!(c_{k} - 2j)!}$
 ways to match $j$ pairs of $k$-cycles for swapping,  leaving  the remaining $c_{k} - 2j$
 $k$-cycles to be fixed. 
Once the $j$ pairs have been specified,  lemmas 
\ref{onecycle} and  \ref{twocycles}  show that there are $k^j \cdot k^{c_{k} - 2j}$ ways 
to factor the  $k$-cycles.  Hence, the total number of factorizations of $\sigma$ is
$\prod\limits_{k=1}^{n}\sum_{j = 0}^{\lfloor c_{k}/2 \rfloor} \frac{k^{c_{k} - j}c_{k}!}
{2^jj!(c_{k} - 2j)!}=  \prod\limits_{k=1}^{n}k^{c_{k}} V_{c_{k}}(k).  $
\end{proof}

\section{Approximation by $\B_{n}$}

Let $\T_{n}(\sigma)$ be the order of $\sigma$ as an element of the symmetric group, i.e.
 the  least common multiple
of the cycle lengths.
The asymptotic distribution of $\T_{n}$
was deduced from that  of $\B_{n}$.
 (See equation 14.4 of \cite{ET1}, section 7 of \cite{Best}, and 
lemma 2 of \cite{ATdp}.)   A similar strategy is used in this paper. 
The goal of this section is to prove that $\B_{n}$ can serve as proxy for  $\N_{n}$.

 The following deterministic 
 lemma supplies a sufficient condition on $\sigma$ that,  when satisfied, imposes
 a bound on the error of the approximation.
\vfill\eject
\begin{lemma}  
\label{deterministic}
Suppose $\xi \geq 1$ and that, for every integer $k > \xi,$ 
we have $c_k(\sigma) \leq 1.$ Also assume that, 
for every positive integer $k$, $c_k(\sigma) \leq \xi.$ 
Then there is a constant $c > 0,$ not dependent on $\sigma$ nor $\xi,$ such that
 $\B_{n}(\sigma) \leq \N_{n}(\sigma) \leq   \B_{n}(\sigma) \cdot \left( c\xi^{\xi}\right)^{\xi}$
\end{lemma}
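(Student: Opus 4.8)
The lower bound $\B_n(\sigma)\le\N_n(\sigma)$ is already recorded as inequality~(\ref{lowerbound}); it also drops out of Theorem~\ref{LPT}, since the $j=0$ summand of $V_m(k)$ equals $1$ and every summand is nonnegative, so $V_m(k)\ge 1$ for all $m,k$. Hence the whole task is the upper bound, and by Theorem~\ref{LPT} it suffices to show that $\prod_{k=1}^{n}V_{c_k}(k)\le(c\xi^{\xi})^{\xi}$.

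The plan turns on a single observation: $V_0(k)=V_1(k)=1$, since in each of these cases the defining sum consists of the lone term $j=0$, which equals $1$. Consequently a factor $V_{c_k}(k)$ can exceed $1$ only when $c_k(\sigma)\ge 2$, and by the first hypothesis every such index must satisfy $k\le\xi$, i.e. $k\in\{1,2,\dots,\lfloor\xi\rfloor\}$. Thus the product above has at most $\lfloor\xi\rfloor\le\xi$ nontrivial factors, and by the second hypothesis each of the corresponding multiplicities satisfies $c_k(\sigma)\le\lfloor\xi\rfloor$.

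Next I would estimate a single factor. Because $k\ge 1$, each summand of $V_m(k)$ is at most the corresponding summand of $V_m(1)$, and $V_m(1)=\sum_{j}\frac{m!}{2^{j}j!(m-2j)!}=|\mathcal T_m|$ is exactly the number of involutions of $[m]$. The sequence $|\mathcal T_m|$ is increasing, so every nontrivial factor obeys $V_{c_k}(k)\le|\mathcal T_{c_k}|\le|\mathcal T_{\lfloor\xi\rfloor}|$. For $|\mathcal T_m|$ itself one may use whatever is convenient: the completely elementary bound $|\mathcal T_m|\le m!\le m^{m}$ already suffices, and so does the sharp Chowla--Herstein--Moore estimate~(\ref{asym-tn}), which gives $|\mathcal T_m|\le C(m/e)^{m/2}e^{\sqrt m}$ for an absolute constant $C$.

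Finally I would assemble the pieces. With at most $\xi$ nontrivial factors, each at most $|\mathcal T_{\lfloor\xi\rfloor}|$, and using $|\mathcal T_m|\le m!\le m^m$ together with $\lfloor\xi\rfloor\le\xi$ and $\xi\ge 1$,
\[
\prod_{k=1}^{n}V_{c_k}(k)\ \le\ |\mathcal T_{\lfloor\xi\rfloor}|^{\lfloor\xi\rfloor}\ \le\ \bigl(\lfloor\xi\rfloor^{\lfloor\xi\rfloor}\bigr)^{\lfloor\xi\rfloor}\ =\ \lfloor\xi\rfloor^{\lfloor\xi\rfloor^{2}}\ \le\ \xi^{\xi^{2}}\ =\ (\xi^{\xi})^{\xi},
\]
which is the asserted inequality with $c=1$; if instead one inserts~(\ref{asym-tn}), the elementary facts $\sqrt m-m/2\le\tfrac12$ and $\xi^{\xi/2}\le\xi^{\xi}$ reduce the product to $(Ce^{1/2}\,\xi^{\xi})^{\xi}$, again of the required shape. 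I do not expect a genuine obstacle here: the one thing that must be noticed is that $V_0=V_1=1$ collapses the product to $O(\xi)$ factors of bounded index and bounded multiplicity, after which any crude estimate for $|\mathcal T_m|$ (or directly for $V_m(k)$) finishes the argument, the only mild annoyance being the bookkeeping with $\lfloor\xi\rfloor$ versus $\xi$.
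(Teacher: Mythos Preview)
Your proof is correct and follows essentially the same line as the paper's: both note $V_0=V_1=1$, restrict attention to the at most $\lfloor\xi\rfloor$ indices $k\le\xi$, and then crudely bound each surviving factor $V_{c_k}(k)$ by something of size $c\,\xi^{\xi}$. The only difference is cosmetic: the paper bounds $V_m(k)\le m!\,e^{1/(2k)}<cm^{m}$ directly, whereas you observe $V_m(k)\le V_m(1)=|\mathcal T_m|\le m!\le m^{m}$, which is a little cleaner and even lets you take $c=1$.
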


\begin{proof} 
We already have the lower bound  (See equation \ref{lowerbound}).
Observe that
$V_0(k)= 1$ and $V_1(k) = 1$ for all $k \in [n].$ For $2 \leq m < \xi$ and $1 \leq k \leq \xi$, 
a very crude bound for $V_m(k)$ suffices. For example, by Stirling's formula we see
 that for $2 \leq m < \xi,$

\[ V_m(k) \leq m!\sum\limits_{j=0}^{\lfloor m/2 \rfloor} \frac{1}{(2k)^jj!} \leq m! e^{\frac{1}{2k}} < cm^{m},\]
 where $c$ is a positive constant independent of $k$ and $m$. 
 By assumption   $c_k(\sigma) \leq \xi$  for all $k\leq \xi$. Therefore
\[ \N_{n}(\sigma)\leq  \B_{n}(\sigma) \cdot \left(\prod_{1 \leq k \leq \xi} V_{c_k(\sigma)}(k)\right)\leq 
 \B_{n}(\sigma) \cdot \left( c\xi^{\xi}\right)^{\xi}\]
\end{proof}

Clearly $ \B_{n}(\sigma)$ is not {\sl always} a good approximation for $\N_{n}(\sigma)$.
For example, if $\sigma$ is the identity permutation with $n$ cycles of length one, 
then $\log \B_{n}(\sigma)=0$ and  $\log \N_{n}(\sigma)\sim \frac{n}{2}\log n$.
There is a tradeoff when applying  lemma \ref{deterministic}. The parameter 
$\xi=\xi(n)$ must be sufficently large so that  most permutations satisfy the hypotheses.
However  the larger $\xi$ is,  the cruder the bound.
The next two lemmas make this precise.

\begin{lemma}
\label{largeks}
If $\xi = \xi(n) \rightarrow \infty$ as $n \to \infty,$ and if $\mathbb{P}_{n} $ is the uniform 
probability measure on $S_{n}$, then
$\mathbb{P}_{n}\!\left(c_{k}\geq 2\ \text{for some}\ k \geq \xi\right) = O(\frac{1}{\xi}) $.
\end{lemma}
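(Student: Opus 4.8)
The plan is to control each single‑length probability $\mathbb{P}_{n}(c_{k}\geq 2)$ separately and then sum over $k\geq\xi$ by a union bound. The starting observation is that $\binom{c_{k}}{2}$ is a non‑negative, integer‑valued random variable with $\binom{c_{k}}{2}\geq 1$ exactly when $c_{k}\geq 2$, so Markov's inequality (the first‑moment method) gives
\[
\mathbb{P}_{n}(c_{k}\geq 2)\ \leq\ \E_{n}\!\left[\binom{c_{k}}{2}\right],
\]
and it suffices to estimate the expected number of unordered pairs of distinct $k$‑cycles of a uniform random $\sigma\in S_{n}$.

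The second step is to compute (or merely bound) this expectation by linearity. If $2k>n$, then $\sigma$ cannot contain two disjoint $k$‑cycles, so $c_{k}\leq 1$ and the probability above is $0$. If $2k\leq n$, I would count: for each unordered pair $\{A,B\}$ of disjoint $k$‑element subsets of $[n]$, the number of permutations of $[n]$ having $A$ and $B$ as cycles (the remaining $n-2k$ points permuted arbitrarily) is $\big((k-1)!\big)^{2}(n-2k)!$, and there are $\tfrac12\binom{n}{k}\binom{n-k}{k}$ such pairs. Dividing by $n!$ and letting the factorials telescope yields
\[
\E_{n}\!\left[\binom{c_{k}}{2}\right]\ =\ \frac{1}{2k^{2}}\qquad(2k\leq n).
\]

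Finally, a union bound over $k\geq\xi$ gives
\[
\mathbb{P}_{n}\!\left(c_{k}\geq 2\ \text{for some}\ k\geq\xi\right)\ \leq\ \sum_{k\geq\xi}\mathbb{P}_{n}(c_{k}\geq 2)\ \leq\ \sum_{k\geq\xi}\frac{1}{2k^{2}}\ \leq\ \frac{1}{2(\xi-1)}\,,
\]
which is $O(1/\xi)$ since $\xi=\xi(n)\to\infty$. I do not expect a genuine obstacle here: the only real computation is the factorial bookkeeping behind the identity for $\E_{n}[\binom{c_{k}}{2}]$, and even that can be bypassed, since the cruder bound $\E_{n}[\binom{c_{k}}{2}]\leq 1/(2k^{2})$ also follows directly from a union bound over the (at most $\tfrac12\binom{n}{k}\binom{n-k}{k}$) candidate pairs of disjoint $k$‑cycles. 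The one point that deserves a word of care is that lengths $k>n/2$ contribute nothing, so the convergent tail $\sum_{k\geq\xi}k^{-2}$ legitimately dominates the entire sum even though $c_{k}$ is only meaningful for $k\leq n$.
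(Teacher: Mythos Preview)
Your argument is correct, and the overall architecture---union bound over $k\geq\xi$ together with a termwise estimate $\mathbb{P}_n(c_k\geq 2)=O(1/k^2)$, then summing the tail of $\sum 1/k^2$---is the same as the paper's. The only difference is how the $1/k^2$ bound is obtained: the paper writes $\mathbb{P}_n(c_k\geq 2)=1-\mathbb{P}_n(c_k=0)-\mathbb{P}_n(c_k=1)$ and bounds the two subtracted terms from below via the alternating inclusion--exclusion inequalities, arriving at $\mathbb{P}_n(c_k\geq 2)\leq 1/k^2$; you instead apply the first-moment bound $\mathbb{P}_n(c_k\geq 2)\leq \E_n\binom{c_k}{2}=1/(2k^2)$ directly from the count of pairs of disjoint $k$-cycles. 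Your route is slightly cleaner (a single exact identity in place of two truncated alternating sums) and even yields a constant a factor of two smaller, though of course that is irrelevant for the $O(1/\xi)$ conclusion. Your handling of the range $k>n/2$ (where $c_k\leq 1$ deterministically) is also explicit, whereas the paper simply truncates the sum at $\lfloor n/2\rfloor$.
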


\begin{proof}
For any choice of $\xi,$ Boole's inequality  implies that
\begin{equation}
\label{boole}
\mathbb{P}_n(c_k \geq 2\ \text{for some}\ k \geq \xi) \leq \sum_{k \geq \xi} 
\mathbb{P}_n(c_k \geq 2) = \sum_{k =\lceil \xi\rceil }^{\lfloor\frac{n}{2}\rfloor }
\left[1 - \mathbb{P}_n(c_k = 0) - \mathbb{P}_n(c_k = 1)\right].
\end{equation}\
It is well known that the probabilities   $\mathbb{P}_n(c_k = j)$ can be calculated using the
 Principle of Inclusion Exclusion, and that  the alternating inequalites yield upper  and 
lower bounds. (See also chapter 5 of Sachkov\cite{Sachkov} for the 
\lq\lq generatingfunctionological\rq\rq approach).
Thus
\begin{equation}
\label{jzero}
\mathbb{P}_n(c_k = 0) = \sum_{j = 0}^{\lfloor \frac{n}{k} \rfloor} 
(\textendash1)^j\frac{1}{j!k^j} \geq 1 - \frac{1}{k}
\end{equation}\
\noindent and

\begin{equation}
\label{jone}
\mathbb{P}_n(c_k = 1) = \frac{1}{k}\sum_{j = 0}^{\lfloor n/k - 1 \rfloor}
 (\textendash1)^j\frac{1}{j!k^j} \geq \frac{1}{k}\left(1 - \frac{1}{k}\right).
\end{equation}\
Putting (\ref{jzero}) and (\ref{jone}) into (\ref{boole}), we get
\[ \mathbb{P}_n(c_k \geq 2\ \text{for some}\ k \geq \xi)
 \leq \sum_{k=\lceil \xi\rceil } ^{\lfloor \frac{n}{2}\rfloor} \left[1 - \left(1 - \frac{1}{k}\right) -
 \frac{1}{k}\left(1 - \frac{1}{k}\right)\right] = \sum_{k =
 \lceil \xi \rceil}^{\lfloor \frac{n}{2}\rfloor} \frac{1}{k^2} = O\!\left(\frac{1}{\xi}\right).\]
\end{proof}

The second hypothesis is even more likely to hold.
\begin{lemma}
\label{tvlemma}
If $\xi=\xi(n)\rightarrow\infty$, then
 ${\mathbb P}_{n}(c_{k}\geq \xi  \text{ for\ some\  } k \leq \xi    )=
O((\frac{\xi}{e^\xi}+\frac{\xi}{n})).$
\end{lemma}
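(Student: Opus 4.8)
The plan is to bound the probability by a union bound over the finitely many values $k \le \xi$, estimating $\mathbb{P}_n(c_k \ge \xi)$ for each $k$ and then summing. First I would split the range of $k$ into two regimes according to whether $k$ is small or large relative to the regime where a cycle of length $k$ can even have multiplicity $\xi$, namely whether $k\xi \le n$ or not; if $k\xi > n$ then $c_k \ge \xi$ is impossible and that term contributes nothing, so only $k \le n/\xi$ matters. For the remaining terms, I would use the standard fact (again via inclusion-exclusion, as in the proof of Lemma \ref{largeks}) that $\mathbb{P}_n(c_k \ge \xi) \le \mathbb{P}_n(c_k = \xi)\cdot(\text{something}) $, or more simply the clean bound $\mathbb{P}_n(c_k \ge m) \le \frac{1}{m!\, k^m}$, which holds because the expected number of ways to choose $m$ disjoint $k$-cycles embedded in a random permutation is exactly $\frac{1}{m!\,k^m}$ when $mk \le n$ (each unordered family of $m$ disjoint $k$-cycles occupies $mk$ points, there are $\frac{n!}{m!\,(k)^m\,(n-mk)!}\cdot\frac{((k-1)!)^m\, (n-mk)!}{1}$ such configurations counted with the permutations fixing them, giving probability $\frac{1}{m!\,k^m}$). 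Markov's inequality applied to this $m$-th factorial-moment-type count then yields $\mathbb{P}_n(c_k \ge \xi) \le \frac{1}{\lceil\xi\rceil!\, k^{\lceil\xi\rceil}} \le \frac{1}{\xi!}$.

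Next I would sum over $k$. Summing the bound $\frac{1}{\xi!\, k^{\xi}}$ (or $\frac{1}{\xi!}$ when $k=1$) over $1 \le k \le \xi$ gives at most $\frac{1}{\xi!}\left(1 + \sum_{k\ge 2} k^{-\xi}\right) \le \frac{2}{\xi!}$ for $\xi$ large, and then Stirling's formula turns $\frac{1}{\xi!}$ into $O\!\left(\frac{\xi}{e^\xi}\right)$ up to the $\sqrt{2\pi\xi}$ factor (in fact $\frac{1}{\xi!} = O\!\left(\left(e/\xi\right)^\xi\right)$, which is far smaller than the claimed $\frac{\xi}{e^\xi}$, so there is slack). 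The appearance of the second term $\frac{\xi}{n}$ in the statement suggests the authors intend a slightly different, more robust estimate: I would instead bound $\mathbb{P}_n(c_k \ge \xi)$ crudely by comparing to the Poisson-type tail but with an error term accounting for $mk$ being a non-negligible fraction of $n$. Concretely, when $mk \le n$ the exact probability of having \emph{at least} one family of $m$ disjoint $k$-cycles is, by inclusion-exclusion, $\sum_{j\ge 0}(-1)^j \frac{1}{(m+j)!\,k^{m+j}}\binom{m+j}{j}\cdot(\text{truncation})$, and the truncation error from $\lfloor n/k\rfloor$ being finite contributes a term of order $\frac{\xi}{n}$ after summing over the $O(\xi)$ values of $k$. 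So the cleanest route is: for each $k \le \xi$, write $\mathbb{P}_n(c_k\ge\xi) \le \frac{1}{\xi!\,k^\xi} + (\text{finite-}n\text{ correction of size } O(1/n))$, sum, and collect the two pieces.

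The main obstacle I anticipate is \emph{not} the tail bound for a single $k$ — that is routine — but rather being careful about the regime $k \le \xi$ with $k$ possibly as large as $\xi$ and $\xi$ itself growing with $n$; when $\xi k$ is comparable to $n$ the Poisson approximation $\mathbb{P}_n(c_k \approx \xi) \approx \frac{e^{-1/k}}{\xi!\,k^\xi}$ degrades, and one must either restrict to $k \le n/\xi$ (checking the complementary range is vacuous) or carry the $O(\xi/n)$ correction honestly through the summation. A secondary bookkeeping point is the $k=1$ term: $\mathbb{P}_n(c_1 \ge \xi)$, the probability of at least $\xi$ fixed points, is $\le \frac{1}{\xi!}$ exactly, which already matches the target, so it causes no trouble once noticed. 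Assembling these, the union bound gives $\mathbb{P}_n(c_k \ge \xi \text{ for some } k \le \xi) \le \frac{C}{\xi!} + O\!\left(\frac{\xi}{n}\right) = O\!\left(\frac{\xi}{e^\xi} + \frac{\xi}{n}\right)$, using Stirling (with plenty of room to spare on the first term).
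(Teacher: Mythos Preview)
Your approach is correct and in fact yields a sharper bound than the paper's, but it follows a genuinely different route. The paper first invokes the Arratia--Tavar\'e Poisson approximation theorem to replace $(c_1,\dots,c_{\lfloor\xi\rfloor})$ by independent Poisson$(1/k)$ variables $Z_k$ at the cost of a total-variation error $O(\xi/n)$ --- this is the sole source of the $\xi/n$ term --- and then bounds each Poisson tail via the exponential Markov inequality $\Pr(Z_k\ge\xi)\le \E[e^{Z_k}]/e^{\xi}<8/e^{\xi}$, giving the $\xi/e^{\xi}$ contribution after a union bound over $k\le\xi$. Your factorial-moment bound $\mathbb{P}_n(c_k\ge m)\le \frac{1}{m!\,k^{m}}$ is exact, valid whenever $mk\le n$ and trivially valid otherwise since then $c_k< m$ deterministically; summing over $k\le\xi$ gives $O(1/\lceil\xi\rceil!)$, which by Stirling is far smaller than $\xi/e^{\xi}$. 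Hence your route is more elementary (no appeal to \cite{AT}) and strictly stronger: no $\xi/n$ term is needed at all. Your paragraph attempting to manufacture a finite-$n$ correction of size $O(1/n)$ is therefore unnecessary and somewhat off target --- the $\xi/n$ in the paper is purely an artifact of the Poisson coupling, not of any truncation in the inclusion--exclusion identity you are using.
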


\begin{proof}
Let $ {\mathbf Z}_{k},\ \xi\leq k\leq n$ be a sequence of independent  
Poisson($1/k$) random variables.
By theorem 4 of \cite{AT},
$\PP_{n}( c_k \leq \xi \text{ for all\ } k \leq \xi )=\Pr( {\mathbf Z}_k 
 \leq \xi   \text{ for all\ } k \leq \xi )+O(\frac{\xi}{n}).$\
Standard estimates using Markov's inequality and moment generating functions shows that 
this probability is small:
\[ \Pr({\mathbf Z}_k  \geq \xi )= \Pr(e^{{\mathbf Z}_k } \geq e^{\xi} )\]
\[ \leq  \frac{  \E(e^{Z_{k}})}{e^{\xi}}=\frac{ e^{\frac{1}{k}(e-1)}}
{e^{\xi}}< \frac{8}{e^{\xi}}.  \]
Therefore 
\[  \Pr({\mathbf Z}_k \leq  \xi  \text{ for all }\ k \leq \xi )\geq
 \left(1- \frac{8}{e^{\xi}}\right)^{\xi}=1-O\!\left(\frac{\xi}{e^{\xi}}\right).\]
\end{proof}

\section{The Asymptotic Lognormality of $\N$}

It is well known that $\B_{n}$ is asymptotically lognormal.

\begin{lemma} 
\label{ET}
(Erd\"{o}s and Tur\'{a}n) 
For any real number $x,$
\[\lim_{n \to \infty} \mathbb{P}_{n}(\log\B_{n}(\sigma) \leq \mu_{n} 
+ x\sigma_n)= \Phi(x)\]
where
$\mu_n =\sum\limits_{k=1}^{n} \frac{\log k}{k} \sim  \dfrac{1}{2}\log^2\!n$,
$\sigma^{2}_n =  \sum\limits_{k=1}^{n} \frac{\log^{2}k}{k}\sim $
$\dfrac{1}{3}\log^{3}\!n$,  and $ \Phi(x) = \frac{1}{\sqrt{2\pi}} $
$\int_{\textendash\infty}^{x}e^{\textendash t^2/2}\,dt $
\end{lemma}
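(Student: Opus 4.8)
The plan is to write $\log\B_{n}(\sigma)=\sum_{k=1}^{n}c_{k}(\sigma)\log k$ and exploit the near-independence of the cycle counts; this is the classical Erd\H{o}s--Tur\'{a}n theorem, which could simply be cited from \cite{ET1}, but the route below is self-contained. The first step is to invoke the Feller coupling (see \cite{AT}): on one probability space one builds the cycle-count vector $(c_{1},\dots,c_{n})$ of a uniform random $\sigma\in S_{n}$ together with independent Poisson random variables $Z_{1},Z_{2},\dots$, where $Z_{k}$ has mean $1/k$, in such a way that $\E|c_{k}-Z_{k}|=O(1/n)$ uniformly for $1\le k\le n$. (This is the same Poisson-approximation circle of ideas used for Lemma~\ref{tvlemma}.) Weighting by $\log k$ and summing gives
\[
\E\Bigl|\,\log\B_{n}-\sum_{k=1}^{n}Z_{k}\log k\,\Bigr|=O\!\left(\frac{1}{n}\sum_{k=1}^{n}\log k\right)=O(\log n)=o(\sigma_{n}),
\]
since $\sigma_{n}=\Theta(\log^{3/2}n)$. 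By Markov's inequality $\log\B_{n}$ and $L_{n}:=\sum_{k=1}^{n}Z_{k}\log k$ therefore differ by $o(\sigma_{n})$ in probability, so by Slutsky's theorem it suffices to prove the central limit theorem for $L_{n}$.

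Now $L_{n}$ is a sum of independent random variables with $\E L_{n}=\sum_{k=1}^{n}\tfrac{\log k}{k}=\mu_{n}$ and $\operatorname{Var}(L_{n})=\sum_{k=1}^{n}\tfrac{\log^{2}k}{k}=\sigma_{n}^{2}$, exactly the centering and scaling appearing in the statement. The next step is to check Lyapunov's condition with third absolute moments: using the Poisson central moments $\mu_{2}=\lambda$, $\mu_{4}=\lambda+3\lambda^{2}$ together with Cauchy--Schwarz one gets $\E|Z_{k}-1/k|^{3}=O(1/k)$, so
\[
\frac{1}{\sigma_{n}^{3}}\sum_{k=1}^{n}\E\bigl|Z_{k}\log k-\tfrac{\log k}{k}\bigr|^{3}=O\!\left(\frac{1}{\sigma_{n}^{3}}\sum_{k=1}^{n}\frac{\log^{3}k}{k}\right)=O\!\left(\frac{\log^{4}n}{\log^{9/2}n}\right)\to 0.
\]
This uses the elementary estimates $\sum_{k\le n}\tfrac{\log^{j}k}{k}\sim\tfrac{1}{j+1}\log^{j+1}n$, obtained by comparison with $\int_{1}^{n}\tfrac{\log^{j}x}{x}\,dx$; these also supply the stated asymptotics $\mu_{n}\sim\tfrac12\log^{2}n$ and $\sigma_{n}^{2}\sim\tfrac13\log^{3}n$. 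Lyapunov's central limit theorem then yields $\tfrac{L_{n}-\mu_{n}}{\sigma_{n}}\Rightarrow\mathcal{N}(0,1)$, and combined with the first paragraph, $\tfrac{\log\B_{n}-\mu_{n}}{\sigma_{n}}\Rightarrow\mathcal{N}(0,1)$, i.e.\ $\PP_{n}(\log\B_{n}\le\mu_{n}+x\sigma_{n})\to\Phi(x)$ for every real $x$.

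The only genuine obstacle is the dependence among the $c_{k}$; once that is dealt with, the rest is bookkeeping with integral comparisons. An alternative to the uniform coupling bound, closer in spirit to the estimates of Section~3, is to truncate: replace $\sum_{k\le n}$ by $\sum_{k\le b}$ with $b=\lfloor n/\log n\rfloor$, observe that the discarded tail $\sum_{b<k\le n}c_{k}\log k$ is nonnegative with mean $\sum_{b<k\le n}\tfrac{\log k}{k}=O(\log n\log\log n)=o(\sigma_{n})$ and hence negligible by Markov, and then apply the total-variation Poisson approximation to $(c_{k})_{k\le b}$, whose error is $O(b/n)=O(1/\log n)\to 0$; the truncated Poisson sum still has mean $\mu_{n}+o(\sigma_{n})$ and variance $\sigma_{n}^{2}(1+o(1))$, so the Lyapunov argument goes through verbatim. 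Either route works; I would use the coupling version, since it avoids choosing a truncation level, but the truncation version recycles machinery already developed in the paper.
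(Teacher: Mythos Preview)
The paper does not actually prove this lemma: it is stated as a classical result of Erd\H{o}s and Tur\'an and is handled entirely by citation (the remark immediately following the statement points to \cite{ET4} and to later proofs in \cite{ABT}, \cite{ABTlim}, \cite{ATdp}, \cite{DPittel}, \cite{Man}). So there is no in-paper argument to compare against; what you have written is strictly more than what the paper supplies.

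Your sketch is essentially the Arratia--Tavar\'e approach of \cite{ATdp}: transfer to independent Poisson variables, then apply a Lindeberg/Lyapunov CLT. The Lyapunov computation and the integral comparisons for $\mu_n$, $\sigma_n^2$, and $\sum_{k\le n}\frac{\log^3 k}{k}$ are all correct. One small caveat: the uniform bound $\E|c_k-Z_k|=O(1/n)$ in the Feller coupling is true but is not quite the statement of Theorem~4 of \cite{AT} (which is a total-variation bound on the first $b$ coordinates); if you want to cite \cite{AT} directly, your ``alternative'' truncation argument with $b=\lfloor n/\log n\rfloor$ is the one that matches that reference cleanly, and as you note it goes through without difficulty. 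Either way the conclusion is sound.
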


\begin{remark}
The first proof lemma \ref{ET} is in the work of Erd\"{o}s and Tur\'{a}n 
\cite{ET4}. Alternative proofs, as well as stronger and more general results have 
been proved using quite varied techniques. 
See, for example, \cite{ABT}, \cite{ABTlim}, \cite{ATdp},\cite{DPittel},\cite{Man}.
\end{remark}

\begin{theorem}
\label{mainthm}
$\mathbb{P}_n(\log\N_{n}(\sigma) \leq \mu_n + x\sigma_n) = \Phi(x) + o(1).$
\end{theorem}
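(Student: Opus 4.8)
The plan is to deduce Theorem \ref{mainthm} from the Erd\H{o}s--Tur\'an Lemma \ref{ET} by showing that $\log\N_n$ and $\log\B_n$ differ by an amount that is negligible compared to the fluctuation scale $\sigma_n \sim \sqrt{\tfrac13}\log^{3/2}n$. Since Lemma \ref{deterministic} gives $0 \le \log\N_n(\sigma) - \log\B_n(\sigma) \le \xi\log(c\xi^\xi) = \xi^2\log\xi + \xi\log c$ whenever $\sigma$ satisfies the two hypotheses (all cycle-length multiplicities at most $\xi$, and multiplicity at most $1$ above $\xi$), the entire argument reduces to choosing $\xi = \xi(n)$ so that (i) the error bound $\xi^2\log\xi$ is $o(\sigma_n) = o(\log^{3/2}n)$, and (ii) the probability that $\sigma$ fails the hypotheses tends to $0$. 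From Lemma \ref{largeks} the failure probability for the first hypothesis is $O(1/\xi)$, and from Lemma \ref{tvlemma} the failure probability for the second is $O(\xi/e^\xi + \xi/n)$, so both go to $0$ as long as $\xi \to \infty$ (and $\xi = o(n)$, which is automatic).

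First I would pick an explicit $\xi$, for instance $\xi = \xi(n) = \lfloor \log^{1/3}n \rfloor$ (or any slowly growing function with $\xi \to \infty$ and $\xi^2\log\xi = o(\log^{3/2}n)$; note $\xi^2\log\xi \asymp \log^{2/3}n \cdot \log\log n = o(\log^{3/2}n)$). Then I would define the ``good'' event $G_n = \{\sigma : c_k(\sigma) \le \xi \text{ for all } k \le \xi,\ \text{and } c_k(\sigma) \le 1 \text{ for all } k > \xi\}$, and observe via Lemmas \ref{largeks} and \ref{tvlemma} plus Boole's inequality that $\mathbb{P}_n(G_n^c) = O(1/\xi) + O(\xi/e^\xi + \xi/n) = o(1)$. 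On $G_n$, Lemma \ref{deterministic} yields $\log\B_n(\sigma) \le \log\N_n(\sigma) \le \log\B_n(\sigma) + \epsilon_n\sigma_n$ where $\epsilon_n := (\xi^2\log\xi + \xi\log c)/\sigma_n \to 0$.

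Next I would carry out the standard sandwiching. Fix a real $x$ and a small $\delta > 0$. For $n$ large enough that $\epsilon_n < \delta$, on the good event $G_n$ the inequality $\log\N_n \le \mu_n + x\sigma_n$ implies $\log\B_n \le \mu_n + x\sigma_n$, and conversely $\log\B_n \le \mu_n + (x-\delta)\sigma_n$ implies $\log\N_n \le \log\B_n + \epsilon_n\sigma_n \le \mu_n + (x-\delta+\epsilon_n)\sigma_n \le \mu_n + x\sigma_n$. Hence
\[
\mathbb{P}_n\bigl(\log\B_n \le \mu_n + (x-\delta)\sigma_n\bigr) - \mathbb{P}_n(G_n^c)
\ \le\ \mathbb{P}_n\bigl(\log\N_n \le \mu_n + x\sigma_n\bigr)
\ \le\ \mathbb{P}_n\bigl(\log\B_n \le \mu_n + x\sigma_n\bigr) + \mathbb{P}_n(G_n^c).
\]
Letting $n \to \infty$ and using Lemma \ref{ET} together with $\mathbb{P}_n(G_n^c) = o(1)$ gives $\Phi(x-\delta) \le \liminf_n \mathbb{P}_n(\log\N_n \le \mu_n + x\sigma_n) \le \limsup_n \mathbb{P}_n(\log\N_n \le \mu_n + x\sigma_n) \le \Phi(x)$. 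Since $\delta > 0$ is arbitrary and $\Phi$ is continuous, $\Phi(x-\delta) \to \Phi(x)$, so the limit exists and equals $\Phi(x)$, which is the claim.

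I do not expect a serious obstacle here: all the hard work is already packaged into Lemmas \ref{deterministic}, \ref{largeks}, \ref{tvlemma}, and \ref{ET}. The only point requiring a little care is the bookkeeping in the choice of $\xi$ — it must simultaneously grow (so the failure probabilities vanish) and grow slowly enough that $\xi^2\log\xi = o(\sigma_n)$; since $\sigma_n$ has order $\log^{3/2}n$, there is a large window of admissible $\xi$ (anything between, say, $\log\log n$ and $\log^{1/2-\eta}n$ works), so this causes no difficulty. A minor alternative, if one prefers to avoid the two-sided sandwich, is to argue directly that $(\log\N_n - \mu_n)/\sigma_n$ and $(\log\B_n - \mu_n)/\sigma_n$ have the same limiting distribution because their difference converges to $0$ in probability — but the explicit $\epsilon$--$\delta$ sandwich above is the cleanest route.
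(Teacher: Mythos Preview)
Your proposal is correct and follows essentially the same route as the paper: use Lemma~\ref{deterministic} on a high-probability ``good'' set (via Lemmas~\ref{largeks} and~\ref{tvlemma}) to show $\log\N_n-\log\B_n=o(\sigma_n)$ there, then sandwich using Lemma~\ref{ET} and the continuity of $\Phi$. The only cosmetic differences are that the paper picks $\xi=\sqrt{\log n}$ rather than your $\lfloor\log^{1/3}n\rfloor$ (both satisfy $\xi^2\log\xi=o(\sigma_n)$), and the paper uses the unconditional inequality $\N_n\ge\B_n$ to get the upper bound directly, so the extra $+\,\mathbb{P}_n(G_n^c)$ on the right of your sandwich is unnecessary (though harmless).
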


\begin{proof}
Because $\N_{n}(\sigma) \geq \B_{n}(\sigma)$ for all $\sigma \in S_n,$ 
one direction is an immediate consequence of lemma \ref{ET}.

\begin{equation}
\mathbb{P}_n(\log\N_{n} \leq \mu_n + x\sigma_n) \leq 
\mathbb{P}_n(\log\B_{n} \leq \mu_n + x\sigma_n) = \Phi(x) + o(1).
\end{equation}\
 
\noindent For the other direction, we use the continuity of $\Phi$ and the bound
 $\N_{n}(\sigma) \leq (c\xi^{\xi})^{\xi} \B_{n}(\sigma)$ from
 Lemma \ref{deterministic}, which, 
due to lemma \ref{largeks} and lemma \ref{tvlemma}, holds with probability 
$1 - O(\frac{1}{\xi}+\frac{\xi}{n}+\frac{\xi}{e^{\xi}}).$

In more detail, let $\epsilon >0$ be a fixed but arbitrarily small postive number. 
We can choose $\delta>0$ so that $|\Phi(x) - \Phi(a)| < \epsilon$ 
whenever $|x - a| < \delta.$
If we choose $\xi=\sqrt{\log n},$  then we have
$ \log\!\left( (c\xi^{\xi} )^{\xi}\right)=o(\sigma_{n}).$
Therefore we can choose  $N_{\epsilon}$ so that, for all $n\geq N_{\epsilon}$, 
$  \log\!\left( (c\xi^{\xi} )^{\xi}\right)< \frac{\delta \sigma_{n}}{2}.$
But then

\begin{align}
\mathbb{P}_n(\log\N(\sigma) \leq \mu_n + x\sigma_n) &\geq 
\mathbb{P}_n\!\left(\log\B(\sigma) +  \log\!\left( (c\xi^{\xi} )^{\xi}\right)
\leq \mu_n + x\sigma_n\right)
\\ &\geq \mathbb{P}_n\!\left(\log\B(\sigma) + \frac{\delta\sigma_n}{2} 
\leq \mu_n + x\sigma_n\right)
\\ &= \mathbb{P}_n\!\left(\log\B(\sigma) \leq \mu_n +
 \left(x - \frac{\delta}{2}\right)\sigma_n\right)
\\ &= \Phi\!\left(x - \frac{\delta}{2}\right) + o(1) > \Phi(x) - \epsilon + o(1)
\end{align}\

\noindent Yet $\epsilon > 0$ was arbitrary, and so $\mathbb{P}_n(\log\N(\sigma) 
\leq \mu_n + x\sigma_n) \geq \Phi(x) + o(1).$
\end{proof}


\bibliographystyle{plain}

\end{document}